\theoremstyle{plain}
\newtheorem{theorem}{Theorem}[section]
\newtheorem{lemma}[theorem]{Lemma}
\theoremstyle{definition}
\theoremstyle{remark}
\begin{document}

\title[]
{Existence and uniqueness of solutions to Bogomol'nyi-Prased-Sommerfeld equations on graphs}

\author[Y. Hu]{Yuanyang Hu}
\address{Yuanyang Hu \\  School of Mathematics and Statistics \\ Henan University \\ Kaifeng, 475004, P. R. China}
\email{yuanyhu@mail.ustc.edu.cn}

\subjclass[2020]{58E30, 35J91, 05C22.}
\keywords{BPS equations, finite graph, existence, uniqueness, variational method.}

\begin{abstract}
Let $G=(V,E)$ be a connected finite graph.
We investigate two Bogomol'nyi-Prased-Sommerfeld equations on $G$. We establish necessary and sufficient conditions for the existence and uniqueness of solutions to the BPS equations. 
\end{abstract}

\maketitle

\section{Introduction}
Vortices involve momentous roles in multitudinous fields of theoretical physics comprizing quantum Hall effect, condensed-matter physics, electroweak theory, superconductivity theory, optics, and cosmology. Taubes established the multiple vortex static solutions for the Abelian Higgs model for the first time in \cite{T1, T2, JT}. After that, a large number of work related to vortex equations has been accomplished; see, for example, \cite{CI, Ha, T, TY, Y} and the references therein. Recently, Lin and Yang pursued a systematic research
\cite{LY4,LY5} of the multiple vortex equations obtained in \cite{E3, E4, E5, GJY, SY, SY8, SY9, SY1}. They established a great deal of 
sharp existence and uniqueness theorems. The approach they employed include
a priori estimates, monotone iterations, and a degree-theory argument, and constrained
minimization. In \cite{LY}, Yang and Lieb presented a series of sharp existence and uniqueness theorems for the solutions of some non-Abelian vortex equations derived in [5,6], which provide an essential mechanism for linear confinement. Recently, Han \cite{Ha} established the existence of multipe vortex solutions for the BPS equations derived in \cite{S} from the theory of multi-intersection of D-branes. Chen and Yang \cite{CY1} proved two sharp existence theorems for the non-Abelian BPS vortex equations arising in the supersymmetric $U(1)\times SU(N)$ gauge theory. In this paper, we investigate the
existence and uniqueness of the solutions to the two BPS equations in \cite{CY1, Ha} on a connected finite
graph.

 During the recent time, the investigation on the equations on graphs has drawn a following and considerable attention from scholars; see, for example, \cite{ Bdd, GC, GJ, GLY, Hu, Hu1, HWY, HLY, LP, LW, TZZ, WC, WN} and the references therein. Grigor'yan, Lin and Yang \cite{GLY} studied the Kazdan-Warner equation 
 $$\Delta u=c-he^{u}$$
 on graph. Huang, Lin and Yau \cite{HLY} proved the existence of solutions to mean field equations 
 $$
 \Delta u+e^{u}=\rho \delta_{0}
 $$
 and
$$
\Delta u=\lambda e^{u}\left(e^{u}-1\right)+4 \pi \sum_{j=1}^{M} \delta_{p_{j}}
$$
 on graphs. Huang, wang and Yang \cite{HWY} studied the Mean field equation 
 $$
 \Delta u+\rho\left(\frac{h e^{u}}{\int_{M} h e^{u} d x}-\frac{1}{|M|}\right)=4 \pi \sum_{j=1}^{N} \alpha_{q_{j}}\left(\delta_{q_{j}}-\frac{1}{|M|}\right)
 $$
 and the relativistic Ablian Chern-Simons equations 
 $$
 \left\{\begin{array}{l}
 	\Delta u_{1}=\lambda e^{u_{2}}\left(e^{u_{1}}-1\right)+4 \pi \sum_{j=1}^{k_{1}} \alpha_{j} \delta_{p_{j}}, \\
 	\Delta u_{2}=\lambda e^{u_{1}}\left(e^{u_{2}}-1\right)+4 \pi \sum_{j=1}^{k_{2}} \beta_{j} \delta_{q_{j}}
 \end{array}\right.
 $$
 on the finite connected graphs. For more research on Chern-Simons equations on graphs, we refer the readers to \cite{Hu, Hu1, HS, LP} and references therein.

 The paper is organized as follows. In Section 2, we introduce preliminaries and then state our main results. Section 3 is devoted to the proof of Theorem \ref{t1}. In section 4, we give the proof of Theorem \ref{t2}.
\section{ Settings and Main Results}

 Let $G=(V,E)$ be a connected finite graph, where $V$ denotes the vetex set and $E$ denotes the edge set. 
Let $\mu: V \to (0,+\infty)$ be a finite measure, and $|V|$=$ \text{Vol}(V)=\sum \limits_{x \in V} \mu(x)$ be the volume of $V$. Denote the space of real-valued functions on $V$ by $V^{\mathbb{R}}$. For $x,y\in V$, let $xy$ be the edge from $x$ to $y$. We write $y\sim x$ if $xy\in E$.
Let $\omega: V \times V\to[0,\infty)$ be an edge weight function satisfying $\omega_{xy}=\omega_{yx}$ for all $x,y \in V$
  and $\omega_{xy}>0$ iff $x\sim y$. For any function $u: V \to \mathbb{R}$, the Laplacian of $u$ is defined by 
\begin{equation}\label{1}
	\Delta u(x)=\frac{1}{\mu(x)} \sum_{y \sim x} w_{y x}(u(y)-u(x)).
\end{equation}
The gradient  $\nabla$ of function $f$ is defined by a vector 
$$\nabla f (x):=\left(  \left[ f(y)-f(x)\right]  \sqrt{\frac{w_{xy}}{2\mu (x)}} \right)_{y\sim x} .$$
 The associated gradient form reads 
\begin{equation}\label{g}
	\Gamma(u, v)(x)=\frac{1}{2 \mu(x)} \sum_{y \sim x} w_{x y}(u(y)-u(x))(v(y)-v(x)).
\end{equation}
We denote the length of the gradient of $u$ by
\begin{equation*}
|\nabla u|(x)=\sqrt{\Gamma(u,u)(x)}=\left(\frac{1}{2 \mu(x)} \sum_{y \sim x} w_{x y}(u(y)-u(x))^{2}\right)^{1 / 2}.
\end{equation*}
Denote, for $p>0$ and any $
u\in V^{\mathbb{R}}$, an integral of $u^{p}$ on $V$ by $\int \limits_{V} u^{p} d \mu=\sum\limits_{x \in V} \mu(x) u^{p}(x)$.

In this paper, we study two BPS equations on $G$. One is  
\begin{equation}\label{11}
    \Delta u_j= e^{u_j}+ \sum\limits_{i=1}^{l} \mathrm{e}^{u_i}-(l+1) +4 \pi \sum_{s=1}^{N_j}  \delta_{p_{j,s}} ,~j=1,2,\cdots,l
\end{equation}
$l$, $N_j(j=1,\dots,l)$ are positive integers, $p_{j,s}(j=1,\dots,l,s=1,\dots,N_j)$ are arbitrarily chosen distinct vertices on the graph, and $\delta_{p}$ is the Dirac mass at the vertex $p$. 
The other is
\begin{equation}\label{e1}
    \Delta u_{i}=\sum_{j=1}^{N} a_{i j}\left(\mathrm{e}^{u_{j}}-\theta^{2}\right)+4 \pi \sum_{s=1}^{n_{i}} \delta_{p_{i, s}}(x), \quad i=1, \ldots, N,
\end{equation} 
where 
\begin{equation}\label{2.3}
    a_{i j}=\frac{1}{N}\left(\frac{e^{2}}{2}-\frac{g^{2}}{2 N}\right)+\delta_{i j} \frac{g^{2}}{2 N}, \quad i, j=1, \ldots, N,
\end{equation}
$n_{i}(i=1,\dots,N)$ are positive integers, $\theta,e,g$ are constants and $N$ is a positive integer.

Our main results can be stated as following:
\begin{theorem}\label{t1}
	Equations \eqref{11} admits a unique solution if and only if $\max\limits_{1\le j \le l} \{N_j \} < \frac{(l+1)}{4 \pi} |V|$.
\end{theorem}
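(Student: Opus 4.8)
The plan is to prove the two implications separately, using throughout that $V$ is finite, so that the unknown $(u_1,\dots,u_l)$ ranges over the finite-dimensional space $\mathbb{R}^{l|V|}$ and existence questions reduce to elementary calculus. For necessity I would integrate the $j$-th equation of \eqref{11} against $d\mu$. Since $\int_V \Delta u_j\, d\mu = 0$ on a finite graph and $\int_V \delta_{p}\, d\mu = 1$, writing $I_j := \int_V e^{u_j}\, d\mu > 0$ and $S := \sum_{i=1}^l I_i$, the equation collapses to the linear relation $I_j + S = (l+1)|V| - 4\pi N_j$. As $I_j > 0$ and $S > 0$, the right-hand side is strictly positive for each $j$, forcing $4\pi N_j < (l+1)|V|$ and hence $\max_{1\le j\le l} N_j < \frac{l+1}{4\pi}|V|$. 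Summing over $j$ to isolate $S$ and back-substituting also gives the explicit values $I_j = |V| - 4\pi N_j + \frac{4\pi}{l+1}\sum_{i=1}^l N_i$, which record how large each $\int_V e^{u_j}$ must be and will reappear below as coercivity constants.

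\emph{Reduction and variational structure.} For the converse I would first strip off the Dirac masses. Because $\Delta f = g$ is solvable on the connected finite graph $G$ precisely when $\int_V g\, d\mu = 0$, I pick background functions $u_j^0$ with $\Delta u_j^0 = 4\pi\sum_{s=1}^{N_j}\delta_{p_{j,s}} - \frac{4\pi N_j}{|V|}$ and set $u_j = u_j^0 + v_j$. With $K_j := e^{u_j^0} > 0$ the system becomes the distribution-free system $\Delta v_j = \sum_{i=1}^l A_{ji}K_i e^{v_i} - c_j$, where $A_{ji} = \delta_{ji} + 1$ and $c_j = (l+1) - \frac{4\pi N_j}{|V|}$. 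The key structural fact is that $A = I + \mathbf{1}\mathbf{1}^{\!\top}$ is symmetric and positive definite with inverse $(A^{-1})_{kj} = \delta_{kj} - \frac{1}{l+1}$; this is what makes the reduced system variational. Setting $\tilde c_k := \sum_j (A^{-1})_{kj}c_j$, it is the Euler--Lagrange system of
$$J(v) = \tfrac12 \sum_{k,j}(A^{-1})_{kj}\int_V \Gamma(v_k,v_j)\,d\mu + \sum_{k=1}^l \int_V K_k e^{v_k}\,d\mu - |V|\sum_{k=1}^l \tilde c_k\, \bar v_k, \qquad \bar v_k := \tfrac{1}{|V|}\int_V v_k\, d\mu.$$

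\emph{Existence and uniqueness.} Viewed on $\mathbb{R}^{l|V|}$, the quadratic term of $J$ is convex (as $A^{-1}$ is positive definite) and the exponential term has diagonal, strictly positive Hessian, so $J$ is strictly convex; this gives uniqueness as soon as one critical point is exhibited. For existence I would establish coercivity: writing $v_k = \bar v_k + v_k'$ with $\int_V v_k'\, d\mu = 0$, Jensen's inequality yields $\int_V K_k e^{v_k}\, d\mu \ge c_0\, e^{\bar v_k}$ for some $c_0 > 0$, whence $J(v) \ge \tfrac12 Q(v') + \sum_k\big(c_0 e^{\bar v_k} - |V|\tilde c_k\,\bar v_k\big)$ with $Q$ positive definite in the zero-mean parts. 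Since $V$ is finite, coercivity and continuity force attainment of a global minimizer, and strict convexity makes it the unique critical point; undoing $u_j = u_j^0 + v_j$ then yields the unique solution of \eqref{11}.

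\emph{Main obstacle.} The crux is the coercivity step, which demands $\tilde c_k > 0$ for every $k$: as $\bar v_k \to -\infty$ the exponential vanishes and only the linear term $-|V|\tilde c_k \bar v_k$ can rescue coercivity. A direct computation gives $\tilde c_k = \frac{1}{|V|}\big(|V| - 4\pi N_k + \frac{4\pi}{l+1}\sum_i N_i\big) = I_k/|V|$, so the entire proof rests on verifying that the hypothesis $\max_j N_j < \frac{l+1}{4\pi}|V|$ delivers positivity of all the $\tilde c_k$. This is the point I expect to require the most care, since it is exactly the bridge between the clean hypothesis and the effective constants dictated by the integral identities of the necessity step.
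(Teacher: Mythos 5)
Your necessity argument and your variational setup are both sound, and your formulation (keeping the variables $v_j$ and weighting the Dirichlet energy by the matrix with entries $(A^{-1})_{kj}=\delta_{kj}-\frac{1}{l+1}$) is a clean alternative to the paper's change of variables $\mathbf{q}=B^{-1}\mathbf{v}$ via the factorization $A=BB^{T}$. However, the step you defer to the end --- verifying that $\max_{j}N_j<\frac{l+1}{4\pi}|V|$ forces $\tilde c_k>0$ for every $k$ --- is not a delicate point to be handled ``with care''; it is a genuine gap, and in fact the implication is \emph{false} whenever $l\ge 2$ and the $N_j$ are sufficiently unequal. Your own identity $\tilde c_k=\frac{1}{|V|}\bigl(|V|-4\pi N_k+\frac{4\pi}{l+1}\sum_{i}N_i\bigr)=I_k/|V|$ exposes the problem. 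Take $l=2$, $N_1=5$, $N_2=1$, and any graph with $|V|=21$: then $\max_j N_j=5<\frac{3}{4\pi}\cdot 21\approx 5.01$, yet
\begin{equation*}
|V|\,\tilde c_1 \;=\; 21-20\pi+\frac{4\pi}{3}\cdot 6 \;=\; 21-12\pi \;<\;0 .
\end{equation*}
So coercivity truly fails; worse, since your integral identities show that any solution of \eqref{11} must satisfy $\int_V e^{u_1}\,d\mu=|V|\,\tilde c_1>0$, no solution exists at all for this data even though the stated hypothesis holds. Consequently the ``if'' direction cannot be rescued by any argument: the sharp solvability condition is positivity of all the constants $I_k$ from your necessity step, i.e. $\max_{k}\bigl(N_k-\frac{1}{l+1}\sum_i N_i\bigr)<\frac{|V|}{4\pi}$, which is strictly stronger than $\max_j N_j<\frac{l+1}{4\pi}|V|$ except when $l=1$ or all $N_j$ coincide.

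For what it is worth, the paper's own proof has exactly the same defect: Lemma \ref{32} asserts ``By virtue of $\max_j N_j<\frac{l+1}{4\pi}|V|$, we obtain $K_i>0$'' with no justification, and the example above refutes that assertion. So your instinct that this bridge is the crux was correct, but the honest conclusion is that the bridge does not exist. Under the corrected hypothesis ($\tilde c_k>0$ for all $k$), everything else you propose goes through: the Jensen-plus-Poincar\'e coercivity, the direct method on the finite-dimensional space $H^1(V)\times\cdots\times H^1(V)$, and strict convexity (the exponential term has strictly positive diagonal Hessian, the $A^{-1}$-weighted quadratic form is positive semidefinite), yielding existence and uniqueness; the same repair fixes the paper. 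As literally stated, though, the equivalence you were asked to prove is false, and a complete ``proof'' of it is impossible.
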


\begin{theorem}\label{t2}
	Equations \eqref{e1} admits a unique solution if and only if \begin{equation*}
		n_{i}<\frac{g^{2} {\theta}^{2}}{8 \pi N}|V|+\frac{1}{N}\left(1-\frac{1}{N}\left( \frac{g}{e}\right)^{2}\right) n, \quad i=1, \ldots, N,
	\end{equation*}
where \begin{equation}\label{2.6}
    n=\sum_{i=1}^{N}n_{i}.
\end{equation}
\end{theorem}

 As in \cite{GLY}, we define a sobolev space and a norm by 
\begin{equation*}
	H^{1}(V)=W^{1,2}(V)=\left\{u: V \rightarrow \mathbb{R}: \int \limits_{V} \left(|\nabla u|^{2}+u^{2}\right) d \mu<+\infty\right\},
\end{equation*}
and \begin{equation*}
	\|u\|_{H^{1}(V)}=	\|u\|_{W^{1,2}(V)}=\left(\int \limits_{V}\left(|\nabla u|^{2}+u^{2}\right) d \mu\right)^{1 / 2}.
\end{equation*}

We next give the following Sobolev embedding and Poincar\'{e} inequality which will be used later in the paper.
\begin{lemma}\label{21}
	{\rm (\cite[Lemma 5]{GLY})} Let $G=(V,E)$ be a finite graph. The Sobolev space $W^{1,2}(V)$ is precompact. Namely, if ${u_j}$ is bounded in $W^{1,2}(V)$, then there exists some $u \in W^{1,2}(V)$ such that up to a subsequence, $u_j \to u$ in $W^{1,2}(V)$.
\end{lemma}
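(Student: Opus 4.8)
The plan is to exploit the fact that $V$ is a finite set, so that $W^{1,2}(V)$ is nothing but the finite-dimensional vector space of all functions $u\colon V\to\mathbb{R}$, which I identify with $\mathbb{R}^{|V|}$ by listing the values $u(x)$, $x\in V$. Because every sum appearing in \eqref{g} and in the definition of $\|u\|_{W^{1,2}(V)}$ ranges over the finite sets $V$ and $\{y\sim x\}$, each such sum is finite for an arbitrary $u$; hence $W^{1,2}(V)$ coincides with $\mathbb{R}^{|V|}$ as a set, and $\|\cdot\|_{W^{1,2}(V)}$ is a genuine norm on this finite-dimensional space. The whole statement will then follow from the Heine--Borel theorem, once I verify that convergence in $W^{1,2}(V)$ is the same as coordinatewise convergence of the values $u(x)$.

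First I would convert $W^{1,2}$-boundedness into boundedness of the coordinates. Set $\mu_0=\min_{x\in V}\mu(x)$, which is strictly positive since $\mu$ takes values in $(0,+\infty)$ and $V$ is finite. Then for every $u$,
\[
\|u\|_{W^{1,2}(V)}^2\ge\int_V u^2\,d\mu=\sum_{x\in V}\mu(x)u(x)^2\ge \mu_0\sum_{x\in V}u(x)^2,
\]
so a bound $\|u_j\|_{W^{1,2}(V)}\le C$ forces $\sum_{x\in V}u_j(x)^2\le C^2/\mu_0$ for all $j$. In particular the vectors $(u_j(x))_{x\in V}\in\mathbb{R}^{|V|}$ are bounded, and by the Bolzano--Weierstrass theorem there exist a subsequence (still denoted $u_j$) and a function $u\colon V\to\mathbb{R}$ with $u_j(x)\to u(x)$ for every $x\in V$. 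Since $V$ is finite this limit lies automatically in $W^{1,2}(V)$.

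It remains to upgrade coordinatewise convergence to convergence in the $W^{1,2}$-norm, where the only point requiring attention is that one must handle the gradient term as well as the $L^2$ term. Writing
\[
\|u_j-u\|_{W^{1,2}(V)}^2=\sum_{x\in V}\Big(\tfrac{1}{2}\sum_{y\sim x}w_{xy}\big((u_j-u)(y)-(u_j-u)(x)\big)^2+\mu(x)(u_j-u)(x)^2\Big),
\]
I observe that the right-hand side is a fixed finite sum whose terms are polynomial, hence continuous, functions of the finitely many differences $u_j(z)-u(z)$, $z\in V$. As each difference tends to $0$, every summand tends to $0$, so $\|u_j-u\|_{W^{1,2}(V)}\to 0$, as required.

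The main difficulty here is conceptual rather than computational: one must recognise that the Sobolev structure carries no analytic content on a finite graph, so precompactness is a direct consequence of finite-dimensionality (equivalently, of the equivalence of all norms on $\mathbb{R}^{|V|}$) together with Heine--Borel. No truncation, Rellich--Kondrachov embedding, or diagonal extraction is needed; the finiteness of $V$ does all the work, and the only items to check cleanly are the strict positivity of $\mu_0$ and the continuity of the norm in the coordinates.
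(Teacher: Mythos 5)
Your proof is correct. Note that the paper itself offers no proof of this lemma---it is quoted directly as Lemma 5 of \cite{GLY}---and your argument (identifying $W^{1,2}(V)$ with $\mathbb{R}^{|V|}$ since $V$ is finite, using $\min_{x\in V}\mu(x)>0$ to pass from norm bounds to coordinate bounds, extracting a subsequence by Bolzano--Weierstrass, and upgrading pointwise to norm convergence by continuity of the norm in the finitely many coordinates) is essentially the standard argument underlying the cited result, so there is nothing to add.
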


\begin{lemma}\label{2.2}
	{\rm (\cite[Lemma 6]{GLY})}	Let $G = (V, E)$ be a finite graph. For all functions $u : V \to \mathbb{R}$ with $\int \limits_{V} u d\mu = 0$, there 
	exists some constant $C$ depending only on $G$ such that $\int \limits_{V} u^2 d\mu \le C \int \limits_{V} |\nabla u|^2 d\mu$.
\end{lemma}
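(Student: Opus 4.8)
The plan is to argue by contradiction, extracting from a sequence of would-be counterexamples a nonzero limiting function whose gradient energy vanishes, and then using connectedness of $G$ to force that limit to be constant and the zero-mean constraint to force it to vanish. Suppose the asserted inequality fails: then for each positive integer $n$ there is $u_n : V \to \mathbb{R}$ with $\int_V u_n \, d\mu = 0$ and $\int_V u_n^2 \, d\mu > n \int_V |\nabla u_n|^2 \, d\mu$. Because the inequality is homogeneous of degree two in $u_n$, I may normalize so that $\int_V u_n^2 \, d\mu = 1$, and then $\int_V |\nabla u_n|^2 \, d\mu < 1/n$. Consequently $\|u_n\|_{W^{1,2}(V)}^2 = \int_V |\nabla u_n|^2 \, d\mu + \int_V u_n^2 \, d\mu$ remains bounded, so $(u_n)$ is bounded in $W^{1,2}(V)$.

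By the precompactness of $W^{1,2}(V)$ (Lemma \ref{21}), a subsequence of $(u_n)$ converges in $W^{1,2}(V)$ to some $u \in W^{1,2}(V)$. Passing to the limit, I would obtain $\int_V u \, d\mu = 0$, $\int_V u^2 \, d\mu = 1$, and $\int_V |\nabla u|^2 \, d\mu = 0$. This last identity is the crux. Expanding the gradient energy by \eqref{g} and summing over all vertices gives
$$\int_V |\nabla u|^2 \, d\mu = \frac{1}{2} \sum_{x \in V} \sum_{y \sim x} w_{xy}\big(u(y)-u(x)\big)^2 = 0,$$
a sum of nonnegative terms since every edge weight satisfies $w_{xy} > 0$. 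Hence $u(y) = u(x)$ whenever $xy \in E$.

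The remaining step is to invoke connectedness of $G$: since adjacent vertices carry equal $u$-values and any two vertices are linked by a path of edges, $u$ is constant on $V$; the constraint $\int_V u \, d\mu = 0$ together with $\mu(x) > 0$ then forces this constant to be $0$, so $u \equiv 0$, contradicting $\int_V u^2 \, d\mu = 1$. This contradiction produces the desired constant $C$, depending only on $G$. The one step requiring genuine care is the passage from vanishing gradient energy to constancy of $u$, where both connectedness of the graph and positivity of the weights are indispensable; everything else is routine since $V$ is finite, so $W^{1,2}(V)$ is finite dimensional and the limiting arguments reduce to Bolzano--Weierstrass. Alternatively, one could bypass the contradiction and simply take $C$ to be the reciprocal of the smallest positive eigenvalue of $-\Delta$ restricted to the finite-dimensional subspace $\{\,u : \int_V u\, d\mu = 0\,\}$, the positivity of which rests on exactly the same connectedness argument.
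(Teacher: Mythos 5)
Your proof is correct, and it is essentially the standard argument: the paper itself gives no proof of this lemma (it is quoted directly from \cite[Lemma 6]{GLY}), and the proof in that cited reference is exactly your contradiction--normalization--compactness argument, using the precompactness of $W^{1,2}(V)$ (Lemma \ref{21}) and connectedness to force the limit to be a constant, hence zero. Your remark that connectedness is indispensable is also apt: although the lemma's wording says only ``finite graph,'' the inequality fails on a disconnected graph, and connectedness is the paper's standing assumption from Section 2.
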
 

Hereafter, we use boldfaced letters to denote column vectors and $A^{T}$ to denote the transpose of $A$ for any matrix $A$.

\section{The proof of Theorem \ref{t1}}

Since $\int\limits_{V} 4\pi \sum\limits_{s=1}^{N_j}  \delta_{p_{j,s}} d \mu= 4\pi N_j$, $j=1,2,\cdots,l$, by a similar arguments as in the proof of Lemma 2.4 of \cite{Hu1}, we can show that there exists $u_{j}^{0}$ such that 
\begin{equation}\label{2}
	\Delta u_{j}^{0}= 4\pi \sum\limits_{s=1}^{N_j} \delta_{p_{j,s}}- \frac{4\pi N_j}{|V|},~x\in V,~j=1,2,\cdots,l,
\end{equation}
and that $(u_{1}^{0},u_{2}^{0},\dots,u_{l}^{0})^{T}$ is the unique solution of \eqref{2} by up to a constant vector $\mathbf{C}$. 
Suppoe $\mathbf{u}=(u_1,u_2,\cdots,u_l)$ is a solution to \eqref{11}. Set $v_j:=u_j-u_j^{0}$, $j=1,2,\cdots,l$. Then we know that 
\begin{equation}\label{3}
	\Delta v_j= e^{u_j^{0} +v_j}+ \sum\limits_{i=1}^{l} \mathrm{e}^{u^{0}_i + v_i}-(l+1) +\frac{4\pi N_j}{|V|},~j=1,\cdots,l.
\end{equation}

The following lemma gives a necessary condition for $\eqref{11}$ to have a solution.
\begin{lemma}\label{31}
	If equations $\eqref{11}$ admits a solution, then $\max\limits_{1\le j \le l} \{N_j \} < \frac{(l+1)}{4 \pi} |V|$.
\end{lemma}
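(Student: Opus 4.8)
The plan is to derive the stated inequality by integrating each equation of \eqref{11} over the whole graph and exploiting the fact that the total mass of a discrete Laplacian vanishes.

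First I would record the identity $\int_V \Delta u \, d\mu = 0$, valid for every function $u: V \to \mathbb{R}$. This follows directly from the definition \eqref{1}: summing $\mu(x)\Delta u(x)$ over $x \in V$ gives $\sum_{x}\sum_{y\sim x} w_{yx}(u(y)-u(x))$, and this double sum is antisymmetric under swapping the roles of $x$ and $y$ precisely because $w_{xy}=w_{yx}$, hence it is zero.

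Next, assuming $(u_1,\dots,u_l)$ solves \eqref{11}, I would integrate the $j$-th equation against $d\mu$ over $V$. The left-hand side vanishes by the previous step. On the right-hand side, $\int_V (l+1)\,d\mu = (l+1)|V|$ and, exactly as in \eqref{2}, $\int_V 4\pi\sum_{s=1}^{N_j}\delta_{p_{j,s}}\,d\mu = 4\pi N_j$. Writing $I_i := \int_V e^{u_i}\,d\mu$, this produces the balance identity
\begin{equation*}
	I_j + \sum_{i=1}^{l} I_i = (l+1)|V| - 4\pi N_j, \qquad j=1,\dots,l.
\end{equation*}

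Finally, since each $I_i = \int_V e^{u_i}\,d\mu$ is strictly positive, the left-hand side above is strictly positive, so the right-hand side must be positive as well; hence $4\pi N_j < (l+1)|V|$, that is, $N_j < \frac{(l+1)}{4\pi}|V|$ for every $j$, and taking the maximum over $j$ yields the claim. There is no genuine obstacle in this argument: the only points that require care are the vanishing of $\int_V \Delta u_j\, d\mu$ (a consequence of the weight symmetry $w_{xy}=w_{yx}$) and the strict positivity of the exponential integrals, which is exactly what upgrades the conclusion from a non-strict to a strict inequality.
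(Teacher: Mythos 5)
Your proof is correct and follows essentially the same route as the paper: integrate the equations over $V$, use that the Laplacian integrates to zero, and conclude from the strict positivity of $\int_V e^{u_i}\,d\mu$. The only cosmetic difference is that you integrate \eqref{11} directly and read off the inequality from the positivity of the sum, whereas the paper integrates the shifted system \eqref{3} and first solves the resulting linear system for each $K_j$; the underlying argument is identical.
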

\begin{proof}
	Integrating \eqref{3} on $V$, we deduce that 
    $$
    0=\int_V \Delta v_j d \mu=\int_V e^{u_j^0+v_j} d \mu+\sum_{i=1}^l \int_V e^{u_i^0+v_i} d \mu-(l+1)|V|+4 \pi v_j,
    $$
    an hence that
\begin{equation}\label{2.1}
    \int_V e^{u_j^0+v_j} d \mu=(l+1)|V|-4 \pi N_j-\sum_{j=1}^l e^{u_j^0+v_j} d\mu .
\end{equation}
    
    This implies that
    $$
\sum_{j=1}^l\int_V e^{u_j^0+v_j} d \mu=l(l+1)|V|-4 \pi \sum_{j=1}^l N_j-l \sum_{i=1}^l\int_V e^{u_i^0+v_i} d \mu,
    $$
    whence
\begin{equation}\label{22}
    \sum_{i=1}^l \int_V e^{u_j^0+v_j} d \mu=l|V|-\frac{4 \pi \sum_{j=1}^l N_j}{l+1} .
\end{equation}
By \eqref{2.1} and \eqref{22}, we see that
	\begin{equation}\label{4}
		\int\limits_{V} \mathrm{e}^{u_{j}^{0}+v_{j}} \mathrm{~d} \mu=|V|-4 \pi N_{j}+\frac{4 \pi}{l+1} \sum_{i=1}^{l} N_{i} =: K_{j}, \quad j=1, \ldots, l.
	\end{equation}
Since $\int_V e^{u_j^0+v_j} d \mu>0$, we conclude that
$$
|V|-4 \pi N_j+\frac{4 \pi \sum_{j=1}^l N_j}{l+1}>0 .
$$
This implies that
$$
|V|+\frac{4 \pi}{l+1} \sum_{j=1}^l N_j>4 \pi N_j \quad \text { for } \quad j=1,2, \cdots, l,
$$
whence
$$
|V|+\frac{4 \pi}{l+1} l \max _{1 \le j \le l} N_j>4 \pi \max _{1 \le j \le l} N_j .
$$
Thus, we see that
$$
\frac{l+1}{4 \pi}|V|>\max _j N_j.
$$

We now finish the proof.
\end{proof}

The following lemma gives a sufficient condition for \eqref{11} to have a solution.
\begin{lemma}\label{32}
	If $\max\limits_{1\le j \le l} \{N_j \} < \frac{(l+1)}{4 \pi} |V|$, then \eqref{11} has a solution.
\end{lemma}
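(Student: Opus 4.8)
The plan is to prove existence by the direct method of the calculus of variations, working with the reformulation \eqref{3}. The key structural observation is that the coupling in \eqref{3} is governed by the symmetric matrix $A=(a_{ij})_{1\le i,j\le l}$ with $a_{ij}=1+\delta_{ij}$, so that \eqref{3} reads $\Delta v_i=\sum_{j=1}^{l}a_{ij}\,e^{u_j^{0}+v_j}-c_i$ with $c_i=(l+1)-\frac{4\pi N_i}{|V|}$. Since $A$ is the identity plus the all-ones matrix, its eigenvalues are $l+1$ (simple) and $1$, hence $A$ is positive definite with inverse $A^{-1}=\big(\delta_{ij}-\tfrac{1}{l+1}\big)_{i,j}$. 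This symmetry is exactly what makes the system variational: multiplying the equations by $A^{-1}$ diagonalizes the exponential nonlinearity, so I would introduce the functional
\begin{equation*}
	\mathcal{J}(\mathbf v)=\frac12\int_{V}\sum_{i,k=1}^{l}\Big(\delta_{ik}-\tfrac{1}{l+1}\Big)\Gamma(v_i,v_k)\,d\mu+\int_{V}\sum_{i=1}^{l}e^{u_i^{0}+v_i}\,d\mu-\int_{V}\sum_{i=1}^{l}d_i\,v_i\,d\mu
\end{equation*}
on $\big(W^{1,2}(V)\big)^{l}$, where $d_i=\sum_{k}(A^{-1})_{ik}c_k=\frac{K_i}{|V|}$ with $K_i$ the constant in \eqref{4}, and verify that its Euler--Lagrange system is precisely \eqref{3} (multiplying the stationarity conditions by $A$ recovers the coupled exponentials).

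The heart of the argument is the coercivity of $\mathcal J$. Decomposing $v_i=\bar v_i+v_i'$ with $\bar v_i=\frac{1}{|V|}\int_V v_i\,d\mu$ and $\int_V v_i'\,d\mu=0$, the smallest eigenvalue $\frac{1}{l+1}$ of $A^{-1}$ yields the pointwise bound $\sum_{i,k}(\delta_{ik}-\frac{1}{l+1})\Gamma(v_i,v_k)\ge\frac{1}{l+1}\sum_i|\nabla v_i'|^2$, and Lemma \ref{2.2} then controls $\sum_i\|v_i'\|_{L^2(V)}^2$ by this quadratic term. For the averages I would use Jensen's inequality in the form $\int_V e^{u_i^0+v_i}\,d\mu=e^{\bar v_i}\int_V e^{u_i^0+v_i'}\,d\mu\ge \alpha_i\,e^{\bar v_i}$ with $\alpha_i=|V|\exp\big(\tfrac{1}{|V|}\int_V u_i^0\,d\mu\big)>0$, while the linear term contributes exactly $-|V|\sum_i d_i\bar v_i$. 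Hence $\mathcal J$ is bounded below by the quadratic control of the $v_i'$ plus $\sum_i\big(\alpha_i e^{\bar v_i}-|V|d_i\bar v_i\big)$, and each summand is a convex function of $\bar v_i$ tending to $+\infty$ as $|\bar v_i|\to\infty$, \emph{provided} $d_i>0$. Combining the two parts gives $\mathcal J(\mathbf v)\to+\infty$ as $\|\mathbf v\|_{(W^{1,2}(V))^{l}}\to\infty$.

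Once coercivity is in hand, a minimizing sequence is bounded in $\big(W^{1,2}(V)\big)^{l}$; by Lemma \ref{21} a subsequence converges in $W^{1,2}(V)$, and continuity of $\mathcal J$ (the graph being finite) identifies the limit as a minimizer. The minimizer is a critical point, hence solves the Euler--Lagrange system, i.e. \eqref{3}; setting $u_j=u_j^0+v_j$ then produces a solution of \eqref{11}. I would moreover note that the strict convexity of $\mathcal J$---coming from the strictly convex exponentials controlling the averages together with the positive-definite quadratic form controlling the oscillatory parts---gives uniqueness, matching the uniqueness asserted in Theorem \ref{t1}.

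I expect the main obstacle to be the coercivity estimate, and more precisely the treatment of the averages $\bar v_i$: this is the only place the hypothesis is used, and it enters solely through the requirement that the effective linear coefficients $d_i=K_i/|V|$ be positive. Thus the real content of the proof is to guarantee, from $\max_{1\le j\le l}N_j<\frac{l+1}{4\pi}|V|$ and the identity \eqref{4}, the positivity of \emph{every} $K_i$; once all $K_i>0$, the one-variable convex functions $\alpha_i e^{t}-|V|d_i t$ are coercive and the whole scheme closes. Verifying this positivity for each index (not merely for the maximizing one, which is all that \eqref{4} delivered in the proof of Lemma \ref{31}) is the delicate point on which the sufficiency direction hinges.
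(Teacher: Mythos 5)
Your variational setup is sound and is a mild variant of the paper's: the paper factorizes $A=BB^{T}$ (Cholesky) and changes variables $\mathbf{q}=B^{-1}\mathbf{v}$ so that the energy has a diagonal Dirichlet part, whereas you keep the original variables and weight the Dirichlet form by $A^{-1}$. The two functionals are equivalent, both have \eqref{3} as Euler--Lagrange system, and your coercivity/compactness/convexity chain (eigenvalue bound for $A^{-1}$, Jensen, Lemma \ref{2.2}, Lemma \ref{21}, strict convexity) mirrors the paper's. The problem is the step you yourself isolate and then postpone: the positivity of every $d_i=K_i/|V|$. You never prove it, so the proposal is incomplete at its decisive point.

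Moreover, this gap cannot be closed, because the implication you need --- that $\max_{1\le j\le l}N_j<\frac{l+1}{4\pi}|V|$ forces $K_i>0$ for every $i$ --- is false when the $N_j$ are not all equal. From \eqref{4}, $K_i=|V|-4\pi N_i+\frac{4\pi}{l+1}\sum_{j=1}^{l}N_j$. Take $l=2$, a connected graph with at least four vertices and measure normalized so that $|V|=8\pi$, and $N_1=4$, $N_2=1$. Then $\max_j N_j=4<6=\frac{l+1}{4\pi}|V|$, yet
\begin{equation*}
K_1=8\pi-16\pi+\frac{4\pi}{3}\cdot 5=-\frac{4\pi}{3}<0,
\end{equation*}
while any solution of \eqref{11} would give $K_1=\int_V e^{u_1^{0}+v_1}\,d\mu>0$. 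So under the stated hypothesis no solution need exist, i.e.\ Lemma \ref{32} itself is false as stated; the correct sufficient (and, by Lemma \ref{31}'s computation, also necessary) condition is $K_i>0$ for all $i$, equivalently $N_i<\frac{|V|}{4\pi}+\frac{1}{l+1}\sum_{j=1}^{l}N_j$ for all $i$, which is strictly stronger than the max-condition except when all $N_j$ coincide (that implication goes only one way: $K_i>0$ for all $i$ implies the max-condition). Note that the paper's own proof commits exactly this error --- it asserts ``by virtue of $\max_j N_j<\frac{l+1}{4\pi}|V|$, we obtain $K_i>0$'' with no justification --- so your instinct that the entire sufficiency direction hinges on this point was correct; the resolution, however, is that the hypothesis must be strengthened, not that a cleverer argument is available.
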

\begin{proof}
	Rewrite \eqref{3}, we have 
	\begin{equation}\label{3.4}
		\Delta \textbf{v} = A \textbf{E}- \boldsymbol{\Phi}, 
	\end{equation}
where 
$$\Delta \textbf{v}= (\Delta v_1,\cdots,\Delta v_{l})^{T},$$ $$\textbf{E}=(e^{u_{1}^{0} +v_1},e^{u_{2}^{0} +v_2},\cdots,e^{u_{l}^{0}+v_l})^{T},$$ $$\boldsymbol{\Phi}=(l+1-\frac{4\pi N_{1}}{|V|},\cdots,l+1-\frac{4\pi N_{l}}{|V|})$$ and
\begin{equation}
	A=(a_{ij})_{l\times l}=\left(\begin{array}{ccccc}
		2 & 1 & 1 & \ldots & 1 \\
		1 & 2 & 1 & \ldots & 1 \\
		1 & 1 & 2 & \ldots & 1 \\
		\vdots & \vdots & \vdots & \ddots & \vdots \\
		1 & 1 & 1 & \ldots & 2
	\end{array}\right).
\end{equation}
In order to find the energy functional of \eqref{3}, we have the following argument.

It is easy to see that $A={B}{B}^{T}$, where $B=(b_{ij})_{l\times l}$ and $$b_{ij}=\begin{cases}
	\sqrt{\frac{1+i}{i} },~i=j, \\
		\sqrt{\frac{1}{j(j+1)} },~i\not= j,~i<j, \\
		0, ~i\not= j,~i>j.
\end{cases}$$  
Let \begin{equation}\label{8}
	\textbf{q}= B^{-1} \textbf{v}, 
\end{equation}where $\textbf{v}=(v_1,\cdots,v_l)^{T}$ and $B^{-1}$ is the inverse of matrix $B$. Direct calculations yield that 
\begin{equation}
    B^{-1}_{l\times l}=\left(\begin{array}{ccccc}
        \frac{\sqrt{2}}{2} & 0 & 0 & \ldots & 0 \\
        -\frac{\sqrt{6}}{6} & \frac{\sqrt{6}}{3} & 0 & \ldots & 0 \\
        \frac{-\sqrt{12}}{12} & -\frac{\sqrt{12}}{12} & \frac{\sqrt{3}}{2} & \ldots & 0 \\
        \vdots & \vdots & \vdots & \ddots & \vdots \\
        -\sqrt{\frac{{1}}{l(l+1)}} & -\sqrt{\frac{{1}}{l(l+1)}} & -\sqrt{\frac{{1}}{l(l+1)}} & \ldots & \sqrt{\frac{{1}}{l(l+1)}}
    \end{array}\right).
\end{equation}
Thus, from \eqref{3.4}, we know that
\begin{equation}\label{3.10}
    \begin{aligned}
        \Delta {\bf q} & =B^{-1} A {\bf E}-B^{-1}{\bf \Phi} \\
        & =B^{-1} B B^{T} {\bf E}-B^{-1}{\bf \Phi} \\
        & =B^{T} {\bf E}-B^{-1}{\bf \Phi}.
    \end{aligned}
\end{equation}
By \eqref{8}, we have 
\begin{equation}\label{3.11}
    \left\{\begin{array}{l}
        v_1=\sqrt{2} q_1 \\
        v_j=\sum_{i=1}^{j-1} \frac{q_i}{\sqrt{i(i+1)}}+\sqrt{\frac{j+1}{j}} q_j, \quad j=2, \ldots, l .
    \end{array}\right.
\end{equation}
Applying \eqref{3.10} and \eqref{3.11}, we have
\begin{equation}\label{5}
	\begin{aligned}
		\Delta q_{1}=& \sqrt{2} \exp \left(u_{1}^{0}+\sqrt{2} q_{1}\right) \\
		&+\frac{1}{\sqrt{2}} \sum_{i=2}^{l} \exp \left(u_{i}^{0}+\sum_{k=1}^{i-1} \frac{q_{k}}{\sqrt{k(k+1)}}+\sqrt{\frac{i+1}{i} }q_{i}\right)-g_{1},
	\end{aligned}
\end{equation} 
\begin{equation}\label{6}
	\begin{aligned}
		\Delta q_{j}=& \sqrt{\frac{j+1}{j}} \exp \left(u_{j}^{0}+\sum_{k=1}^{j-1} \frac{q_{k}}{\sqrt{(k+1)k}}+\sqrt{\frac{j+1}{j} }q_{j}\right) \\
		&+\sqrt{\frac{1}{j(j+1)}} \sum_{i=j+1}^{l} \exp \left(u_{i}^{0}+\sum_{k=1}^{i-1} \frac{q_{k}}{\sqrt{k(k+1)}}+\sqrt{\frac{i+1}{i} }q_{i}\right)-g_{j},\\
		&j=1,\cdots,l,
	\end{aligned}
\end{equation}
\begin{equation}\label{7}
	\Delta q_{l}=\sqrt{\frac{l+1}{l}} \exp \left(u_{l}^{0}+\sum_{k=1}^{l-1} \frac{q_{k}}{\sqrt{(k+1)k}}+\sqrt{\frac{l+1}{l}} q_{l}\right)-g_{l},
\end{equation}
where $\textbf{g}=(g_1,\cdots,g_l)^{T}=B^{-1}\boldsymbol{\Phi}$. 

Now we define the energy functional
\begin{equation}\label{3.15}
	\begin{gathered}
		I(\mathbf{q})=I\left(q_{1}, \ldots, q_{l}\right)=\int\limits_{V}\left\{\frac{1}{2} \sum_{i=1}^{l}   \Gamma({q}_{i},{q}_{i})        +\exp \left(u_{1}^{0}+\sqrt{2} q_{1}\right)-g_{1} q_{1}\right. \\
		\left.+\sum_{i=2}^{l} \exp \left(u_{i}^{0}+\sum_{k=1}^{i-1} \frac{q_{k}}{\sqrt{k(k+1)}}+\sqrt{\frac{i+1}{i} }q_{i}\right)-g_{i} q_{i}\right\} \mathrm{d} \mu .
	\end{gathered}
\end{equation}
It is easy to check that if $I$ has a critical point, then it is a solution of equations \eqref{5}-\eqref{7}. Furthermore, one can check that the system of equations \eqref{5}-\eqref{7} are the Euler Lagrange equations of the functional $I$.

We next prove $I$ has a critical point.

 Define $\mathbf{K}:=(K_1 ,\dots,K_l)^{T}$. By \eqref{4}, we have $\mathbf{K}= |V| (B^{T})^{-1} \mathbf{g} .$ For any $q\in H^{1}(V)$, we can write 
 \begin{equation}\label{no}
 	q=\bar{q} + \hat{q}, 
 \end{equation}where $ \int\limits_{V} \hat{q} d \mu =0 $ and $\bar{q}= \frac{\int\limits_{V} q d\mu}{|V|}$. 
 
 For any $\textbf{q} \in \underbrace{H^{1}(V)\times \dots \times H^{1}(V)}_{l}$, from \eqref{3.11} and \eqref{3.15}, we conclude that
\begin{equation}\label{3.17}
	\begin{aligned}
		I(\mathbf{q}) &=\int\limits_{V} \frac{1}{2} \sum_{i=1}^{l}  \Gamma(\hat{q}_{i},\hat{q}_{i})    \mathrm{d} \mu+\int\limits_{V} \sum_{i=1}^{l} \exp \left(u^{0}_{i}+\hat{v}_{i}+\bar{v}_{i}\right) \mathrm{d} \mu-K^{T} \bar{\mathbf{v}} \\
		&=\int\limits_{V} \frac{1}{2} \sum_{i=1}^{l}  \Gamma(\hat{q}_{i},\hat{q}_{i})    \mathrm{d} \mu +\int\limits_{V} \sum_{i=1}^{l} \exp \left(u^{0}_{i}+\hat{v}_{i}+\bar{v}_{i}\right) \mathrm{d} \mu-\sum_{i=1}^{l} K_{i} \bar{v}_{i}.
	\end{aligned}
\end{equation}
By Jensen's inequality, we obtain 
\begin{equation}
exp\left( \frac{\int\limits_V u_{i}^{0}+\hat{v}_{i}+\bar{v}_{i} d\mu}{|V|}	 \right)  \le \frac{1}{|V|} \int\limits_V exp\left(u_{i}^{0}+\hat{v}_{i}+\bar{v}_{i}\right) d\mu.
\end{equation}
Thus we deduce that
\begin{equation}
	\int\limits_V exp\left(u_{i}^{0}+\hat{v}_{i}+\bar{v}_{i}\right) d\mu \ge |V|exp\left( \frac{1}{|V|} \int\limits_{V} u_{i}^{0} d\mu \right) e^{\bar{v}_{i}} =:c_{i} e^{\bar{v}_{i}}~\text{for}~i=1,\cdots,l.
\end{equation}
From \eqref{4}, we obtain $K_i >0$ for $i=1,\cdots,l$. Hence, applying the elementary inequality 
\begin{equation}\label{320}
    \frac{a}{b}\left(1-\ln \left[\frac{a}{b c}\right]\right) \leq c e^{b x}-a x, \quad a, b, c>0, \quad x \in \mathbb{R}
\end{equation}
in \eqref{3.17}, we have
\begin{equation}\label{9}
	\begin{aligned}
	I(\mathbf{q}) &\ge \int_{V} \frac{1}{2} \sum\limits_{i=1}^{l} \Gamma(\hat{q}_{i},\hat{q}_{i}) d\mu+ \sum\limits_{i=1}^{l}(c_{i}e^{\bar{v}_{i}}- K_{i} \bar{v}_{i}) \\
	&\ge \int_{V} \frac{1}{2} \sum\limits_{i=1}^{l} \Gamma(\hat{q}_{i},\hat{q}_{i}) d \mu + \sum\limits_{j=1}^{l} K_{j} (ln{\frac{c_j}{K_j}}+1).
\end{aligned}
\end{equation}
It follows that $I$ is bounded from below in $H^{1}(V)$. Furthermore, it is easy to check that $I$ is strictly convex.
We next show that $I$ is weakly lower semi-continuous in $H^{1}(V)$. 
Suppose that $\{q_{1}^{(k)},q_{2}^{(k)},\cdots,q_{l}^{(k)}\}_{k=1}^{\infty}$ satisfying $(q_{1}^{(k)},q_{2}^{(k)},\cdots,q_{l}^{(k)})\rightharpoonup(q_{1},q_{2},\cdots,q_{l})$ in 
$\underbrace{H^{1}(V)\times \dots \times H^{1}(V)}_{l}$, i.e., $q_{i}^{(k)}\rightharpoonup q^{k}$ in $H^{1}(V)$ for all $i=1,2,\cdots,l$. Since $V$ is a finite graph, $H^{1}(V)=V^{\mathbb{R}}=L^{2}(V)$. Hence, we know that the dual space to $H^{1}(V)$ is $V^{\mathbb{R}}=L^{2}(V)$. This implies that 
\begin{equation}\label{3.22}
    \sum_{x\in V}q_{i}^{(k)}(x)f(x)\mu(x)\to\sum_{x\in V}q_{i}(x)f(x)\mu(x)~\text{ as }~k\to\infty
\end{equation} 
for all $f\in V^{\mathbb{R}}$ and $i=1,2,\cdots,l$. Fix $x_0\in V$. Taking 
\begin{equation}
   f(x)=\left\{\begin{aligned}
       \frac{1}{\mu(x_0)},~~x=x_0,\\
       0,~~x\not=x_0.
   \end{aligned}\right. 
\end{equation}
in \eqref{3.22}. Then we see that
\begin{equation}
q_{i}^{(k)}(x)\to q(x)~\text{as}~k\to\infty~\text{ uniformly for all } x\in V \text{and}~ i=1,2,\cdots,l. 
\end{equation}
This implies that 
\begin{equation}
    I(\bf{q})\le \liminf_{k\to \infty} I(\bf{q}^{k}),
\end{equation}
where ${\bf{q}}=(q_{1},q_{2},\cdots,q_{l})^{T}$, ${\bf q}^{(k)}=(q_{1}^{(k)},q_{2}^{(k)},\cdots,q_{l}^{(k)})^{T}$.
Thus we can choose a minimizing sequence $\{(q_{1,k},\dots,q_{l,k})\}_{k=1}^{\infty}$ of the following minimization problem
$$\inf\limits\{I(\mathbf{q}) | \mathbf{q}=(q_1,\dots,q_l)^{T}\in \underbrace{H^{1}(V)\times \dots \times H^{1}(V)}_{l}\} .$$ 
In view of $\lim\limits_{t \to \pm \infty} c_{i} e^{t}- K_{i}t=\infty$ for $i=1,\dots,l$. we deduce from \eqref{9} that $\{\bar{v}_{i,k}\}_{k=1}^{\infty}$ is bounded for $i=1,\dots,l$. By \eqref{8}, we know that $\{\bar{q}_{i,k}\}_{k=1}^{\infty}$ is bounded for $i=1,\cdots,l$. From \eqref{9}, we see that $\{ |\nabla \hat{q}_{i,k}| \}_{k=1}^{\infty}$ is bounded in $L^{2} (V)$ for $i=1,\cdots,l.$ From Lemma \ref{2.2}, we conclude that $\{ \hat{q}_{i,k} \}_{k=1}^{\infty}$ is bounded in $L^{2}(V)$, $i=1,\cdots,l.$ Thus, $\{q_{i,k} \}_{k=1}^{\infty}$ is bounded in $L^{2}(V)$. Therefore, $\{q_{i,k} \}_{k=1}^{\infty}$ is bounded in $H^{1} (V).$ Therefore, by Lemma \ref{21}, there exists $\textbf{q}_{\infty}:=\left( q_{1,\infty},\dots,q_{l,\infty} \right)^{T} \in \underbrace{H^{1}(V)\times \dots \times H^{1}(V)}_{l} $ so that, by passing to a subsequent, 
\begin{equation}
    q_{i,k} \to q_{i,\infty}~in~H^{1}(V) \text{ as } k\to +\infty \text{ for } i=1,\cdots,l, 
\end{equation}
and
\begin{equation}
	q_{i,k} \to q_{i,\infty} \text{ uniformly for } x\in V \text{ as } k\to +\infty \text{ for } i=1,\cdots,l. 
\end{equation}
Thus, $\mathbf{q}_{\infty}$ is a critical point of $I$. Since $I$ is srtictly convex in $H^{1}(V)$, we know that the solution of equations \eqref{5}-\eqref{7} is unique.

We now complete the proof.
\end{proof}

\begin{proof}[Proof of Theorem \ref{t1}]
	The desired conclusion follows directly from Lemmas \ref{31} and \ref{32}.
\end{proof}

\section{The proof of Theorem \ref{t2}}

	Set $u_{i}^{0}$ be a solution of
	\begin{equation}\label{23}
		\Delta u_{i}^{0}=-\frac{4 \pi n_{i}}{|V|}+4 \pi \sum_{s=1}^{n_{i}} \delta_{p_{i, s}}(x), \quad i=1, \ldots, N.
	\end{equation}
Set \begin{equation}
	u_{i}=u_{i}^{0}+U_{i}, \quad i=1, \ldots, N.
\end{equation}
Then \eqref{e1} is transformed into 
\begin{equation}\label{24}
	\Delta U_{i}=\sum_{j=1}^{N} a_{i j}\left(\mathrm{e}^{u_{j}^{0}+U_{j}}-\theta^{2}\right)+\frac{4 \pi n_{i}}{|V|}, \quad i=1, \ldots, N.
\end{equation}
We now write \eqref{24} in the vector form
\begin{equation}\label{25}
	\Delta \mathbf{U}=H\mathbf{G}+\mathbf{F},
\end{equation} 
where 
\begin{equation}
	\mathbf{U}=\left(U_{1}, \ldots, U_{N}\right)^{T}, \quad \mathbf{G}=\left(\mathrm{e}^{u_{1}^{0}+U_{1}}, \ldots, \mathrm{e}^{u_{N}^{0}+U_{N}}\right)^{T},
\end{equation}
\begin{equation}
	\mathbf{F}=\left(\frac{4 \pi n_{1}}{|V|}-\theta^{2} \sum_{j=1}^{N} a_{1 j}, \ldots, \frac{4 \pi n_{N}}{|V|}-\theta^{2} \sum_{j=1}^{N} a_{N j}\right)^{T} =:\left(f_{1}, \ldots, f_{N}\right)^{T},
\end{equation}
and $H=\left(a_{i j}\right)_{N \times N}$ is a $N\times N$ matrix. In view of \eqref{2.3}, one may check that $H$ is positive definite. Then, applying the Cholesky decomposition theorem, we can write 
\begin{equation}
	H=S^{T} S,
\end{equation}
where $S=(t_{ij})_{N\times N}$ is an upper triangular matrix,  
$$
\begin{aligned}
	&t_{11}=\sqrt{a+b}, \quad t_{12}=t_{13}=\cdots=t_{1 N}=\frac{a}{t_{11}} =: \alpha_{1}>0, \\
	&t_{22}=\sqrt{(a+b)-\alpha_{1}^{2}}, \quad t_{23}=t_{24}=\cdots=t_{2 N}=\frac{a-\alpha_{1}^{2}}{t_{22}}=: \alpha_{2}>0, \\
	&\ldots
\end{aligned}
$$

$$
t_{N-1,N-1}=\sqrt{(a+b)-\sum_{i=1}^{N-2} \alpha_{i}^{2}}, \quad t_{N-1,N}=\frac{a-\sum_{i=1}^{N-2} \alpha_{i}^{2}}{t_{N-1,N-1}}=: \alpha_{N-1}>0,
$$
$$
t_{N,N}=\sqrt{(a+b)-\sum_{i=1}^{N-1} \alpha_{i}^{2}} ,
$$
 $a=\frac{\left(e^{2} / 2-g^{2} / 2 N\right)}{N}$ and $b=\frac{g^{2}}{2N}$.
Set $\mathbf{v}=\left(v_{1}, \ldots, v_{N}\right)^{T}, L=\left(S^{T}\right)^{-1} =: \left(l_{i j}\right)_{N \times N}$ and $$\mathbf{v}=L \mathbf{U}.$$
Combining this with \eqref{25}, one may obtain
\begin{equation}\label{49}
	\Delta \mathbf{v}=S\mathbf{G}+L \mathbf{F}.
\end{equation}
Taking $\alpha_{N}=0$, we rewrite \eqref{49} as  
\begin{equation}\label{4.9}
	\Delta v_{i}=t_{i i} \mathrm{e}^{u_{i}^{0}+t_{i i} v_{i}+\sum_{k=1}^{i-1} \alpha_{k} v_{k}}+\alpha_{i} \sum_{j=i+1}^{N} \mathrm{e}^{u_{j}^{0}+t_{j j} v_{j}+\sum_{k=1}^{j-1} \alpha_{k} v_{k}}+\sum_{j=1}^{i} l_{i j} f_{j}, \quad i=1, \ldots, N.
\end{equation}
Here, we understand $\sum_{j=N+1}^{N}$ as $\sum_{j=N}^{N}$ when $i=N$.
Define the energy functional 
\begin{equation}\label{410}
	J(\mathbf{v})=\int\limits_{V}\left\{\frac{1}{2} \sum_{i=1}^{N}  \Gamma(v_{i},v_{i}) +\sum_{i=1}^{N} \mathrm{e}^{u_{i}^{0}+t_{i i} v_{i}+\sum_{k=1}^{i-1} \alpha_{k} v_{k}}+\sum_{i=1}^{N}\left(\sum_{j=1}^{i} l_{i j} f_{j}\right) v_{i}\right\} \mathrm{d} \mu .
\end{equation}
Then it is easy to check that the system of equations \eqref{4.9} are the Euler Lagrange equations of the functional $J$.
The following lemma gives a necessary condition for \eqref{e1} to admit a solution.
\begin{lemma}\label{311}
	If \eqref{e1} admits a solution, then 
	\begin{equation}
		n_{i}<\frac{g^{2} \theta^{2}}{8 \pi N}|V|+\frac{1}{N}\left(1-\frac{1}{N}\left[\frac{g}{e}\right]^{2}\right) n, \quad i=1, \ldots, N.
	\end{equation}
\end{lemma}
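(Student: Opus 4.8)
The plan is to integrate \eqref{e1} over $V$ and exploit the fact that on a finite graph the integral of a Laplacian vanishes, converting the system into a finite linear system for the positive quantities $I_j := \int_V e^{u_j}\,d\mu$. Indeed, since $\sum_{x\in V}\mu(x)\Delta u_i(x)=\sum_{x}\sum_{y\sim x}w_{yx}(u_i(y)-u_i(x))=0$ by the symmetry $w_{xy}=w_{yx}$, and since $\int_V 4\pi\sum_{s=1}^{n_i}\delta_{p_{i,s}}\,d\mu = 4\pi n_i$, integrating the $i$-th equation gives
\begin{equation*}
	\sum_{j=1}^N a_{ij}\bigl(I_j - v^2|V|\bigr) = -4\pi n_i,\qquad i=1,\ldots,N.
\end{equation*}
Writing $x_j := I_j - v^2|V|$ and $\mathbf{n}=(n_1,\ldots,n_N)^T$, this is precisely $\hat A\,\mathbf{x} = -4\pi\,\mathbf{n}$.

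Next I would solve for $\mathbf{x}$ by inverting $\hat A$ explicitly. Since $a_{ij}=a+\delta_{ij}b$ with $a=\tfrac1N(\tfrac{e^2}{2}-\tfrac{g^2}{2N})$ and $b=\tfrac{g^2}{2N}$, we have $\hat A = aJ+bI$ where $J$ is the all-ones matrix; using $J^2=NJ$ one checks that $\hat A^{-1}=\tfrac1b I - \tfrac{a}{b(b+Na)}J$. This yields the closed form
\begin{equation*}
	x_j = -\frac{4\pi}{b}\Bigl(n_j - \frac{a\,n}{b+Na}\Bigr),\qquad n:=\sum_{i=1}^N n_i.
\end{equation*}

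The crucial structural input is then positivity: because $e^{u_j}>0$ pointwise and $\mu>0$, we have $I_j = x_j + v^2|V| > 0$ for every $j$. Substituting the formula for $x_j$ and dividing by $b>0$, the strict inequality $I_j>0$ is equivalent to
\begin{equation*}
	n_j < \frac{b\,v^2|V|}{4\pi} + \frac{a\,n}{b+Na},\qquad j=1,\ldots,N.
\end{equation*}
Finally I would simplify the two terms with the explicit constants. The key simplification is $b+Na = \tfrac{g^2}{2N}+\tfrac{e^2}{2}-\tfrac{g^2}{2N}=\tfrac{e^2}{2}$, which collapses the second term to $\tfrac{2an}{e^2}=\tfrac1N\bigl(1-\tfrac1N(\tfrac{g}{e})^2\bigr)n$, while the first term becomes $\tfrac{g^2v^2}{8\pi N}|V|$; together these reproduce the claimed bound.

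I expect the only real obstacle to be the bookkeeping in the matrix inversion and the ensuing algebraic simplification; conceptually the argument is just \emph{integrate, solve the linear system, and insert $\int_V e^{u_j}\,d\mu>0$}. One could equivalently route the computation through the Cholesky factor $T=(t_{ij})$ already introduced in the text, but inverting $aJ+bI$ directly is cleaner and avoids carrying the triangular entries.
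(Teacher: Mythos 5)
Your proposal is correct and is essentially the paper's own argument: the paper likewise integrates the system (after passing through the Cholesky-transformed variables, whose exponential integrals $q_i$ coincide with your $I_i$), inverts $\hat A$ explicitly via the same $(N-1)a+b$ / $-a$ formula, and derives the bound from the strict positivity $q_i=\int_V e^{u_i}\,d\mu>0$. Your shortcut of working directly with \eqref{e1} instead of the triangular system \eqref{4.9} is only a cosmetic simplification, one you correctly identified as equivalent.
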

\begin{proof}
	Let \begin{equation}
		q_{i}=\int_{V} \mathrm{e}^{u_{i}^{0}+t_{i i} v_{i}+\sum_{k=1}^{i-1} \alpha_{k} v_{k}} \mathrm{~d} \mu, \quad i=1, \ldots, N.
	\end{equation}
From \eqref{4.9}, we get
\begin{equation}\label{413}
	t_{i i} q_{i}+\alpha_{i} \sum_{j=i+1}^{N} q_{j}=-|V| \sum_{j=1}^{i} l_{i j} f_{j} =: p_{i}, \quad i=1, \ldots, N.
\end{equation}
By \eqref{413}, we deduce that
\begin{equation}\label{14}
S\mathbf{q}=-|V| L \mathbf{F}=\mathbf{p},
\end{equation}
where $\mathbf{p}=(p_1,\dots,p_N)^{T}$ and $\mathbf{q}=(q_1,\dots,q_N)^{T}$.
This implies that \begin{equation}\label{4.15}
   \mathbf{q}=-|V|S^{-1}(S^{T})^{-1}F=-|V|H^{-1}F.
\end{equation}
Direct calculations yield that
\begin{equation}\label{15}
H^{-1}=\frac{1}{b(N a+b)}\left(\begin{array}{cccc}
		(N-1) a+b & -a & \cdots & -a \\
		-a & (N-1) a+b & \cdots & -a \\
		\cdots & \cdots & \cdots & \cdots \\
		-a & -a & \cdots & (N-1) a+b
	\end{array}\right).
\end{equation}
From \eqref{15} and \eqref{4.15}, we deduce that
\begin{equation*}
		q_{i} 
		=\theta^{2}|V|+\frac{4 \pi a}{b(N a+b)} \sum_{j=1}^{N} n_i-\frac{4 \pi}{b} n_{i}, \quad i=1, \ldots, N.
\end{equation*}
Recalling that $a=\frac{\left(e^{2} / 2-g^{2} / 2 N\right)}{N}$ and $b=\frac{g^{2}}{2N}$, it follows that
\begin{equation}
	q_{i}=\theta^{2}|V|+8 \pi\left(\frac{1}{g^{2}}-\frac{1}{N e^{2}}\right) n-\frac{8 \pi N}{g^{2}} n_{i}>0, \quad i=1, \ldots, N.
\end{equation}

We now complete the proof.
\end{proof}

We give a sufficient condition for equations \eqref{24} to have a solution in the following lemma.
\begin{lemma}\label{41}
	If \begin{equation}
		n_{i}<\frac{g^{2} \theta^{2}}{8 \pi N}|V|+\frac{1}{N}\left(1-\frac{1}{N}\left(\frac{g}{e}\right)^{2}\right) n, \quad i=1, \ldots, N,
	\end{equation} then \eqref{24} admits a solution.
\end{lemma}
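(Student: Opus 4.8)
The plan is to obtain a solution of \eqref{24} as a minimizer of the energy functional $J$ defined in \eqref{410}, mirroring the argument used for Lemma \ref{32}. First I would record that a critical point $\mathbf{v}$ of $J$ solves the transformed system \eqref{4.9}, and that since $T$ and $L=(T^T)^{-1}$ are invertible, setting $\mathbf{U}=L^{-1}\mathbf{v}$ recovers a solution of \eqref{24}; this is a routine computation of the Euler--Lagrange equations. Thus it suffices to prove that $J$ attains its infimum on $\underbrace{H^{1}(V)\times\cdots\times H^{1}(V)}_{N}$.

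To that end I would decompose each component as $v_i=\bar v_i+\hat v_i$ with $\int_V \hat v_i\,d\mu=0$ and $\bar v_i=|V|^{-1}\int_V v_i\,d\mu$, exactly as in \eqref{no}. Since $\bar v_i$ is constant, the Dirichlet part of $J$ depends only on $\hat v_i$, contributing $\frac12\sum_i\int_V|\nabla\hat v_i|^2\,d\mu$. Applying Jensen's inequality to each exponential integral, and noting that the zero-average contributions of $\hat v$ drop out, yields a lower bound of the form $\tilde c_i\exp\bigl(t_{ii}\bar v_i+\sum_{k=1}^{i-1}\alpha_k\bar v_k\bigr)$ with $\tilde c_i=|V|\exp\bigl(|V|^{-1}\int_V u_i^{0}\,d\mu\bigr)>0$. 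For the linear part, \eqref{413} gives $\sum_{j=1}^{i}l_{ij}f_j=-p_i/|V|$, so its integral equals $-\sum_i p_i\bar v_i$.

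The decisive step is the change of variables $\mathbf{w}=T^{T}\bar{\mathbf{v}}$, under which the exponents become precisely $w_i$ and $\bar{\mathbf{v}}=L\mathbf{w}$. Using the identity $L^{T}=T^{-1}$ together with $T\mathbf{q}=\mathbf{p}$ from \eqref{14}, one computes $\sum_i p_i\bar v_i=\mathbf{p}^{T}L\mathbf{w}=(L^{T}\mathbf{p})^{T}\mathbf{w}=\mathbf{q}^{T}\mathbf{w}=\sum_i q_i w_i$, so that
\begin{equation*}
J(\mathbf{v})\ge \frac12\sum_{i=1}^{N}\int_V|\nabla\hat v_i|^2\,d\mu+\sum_{i=1}^{N}\bigl(\tilde c_i e^{w_i}-q_i w_i\bigr).
\end{equation*}
By Lemma \ref{311} the hypothesis on $n_i$ is equivalent to $q_i>0$ for every $i$, so each summand $\tilde c_i e^{w_i}-q_i w_i$ is bounded from below and tends to $+\infty$ as $w_i\to\pm\infty$. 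This shows simultaneously that $J$ is bounded below and coercive in the variables $\mathbf{w}$ and $\nabla\hat{\mathbf{v}}$.

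From here the argument is identical to that of Lemma \ref{32}. Along a minimizing sequence, coercivity forces $\{w_{i,k}\}$ to be bounded, hence $\{\bar v_{i,k}\}$ bounded since $\bar{\mathbf{v}}=L\mathbf{w}$, and $\{\nabla\hat v_{i,k}\}$ bounded in $L^{2}(V)$; Lemma \ref{2.2} then bounds $\{\hat v_{i,k}\}$ in $L^{2}(V)$, so the sequence is bounded in $\underbrace{H^{1}(V)\times\cdots\times H^{1}(V)}_{N}$. By the precompactness Lemma \ref{21}, a subsequence converges to some $\mathbf{v}_\infty$, which is a minimizer and therefore a critical point of $J$, yielding a solution of \eqref{24}. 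I expect the main obstacle to be precisely the linear-algebra bookkeeping that converts the linear coefficients $\sum_{j\le i}l_{ij}f_j$ into the quantities $q_i$, which is what pins the coercivity threshold exactly to the stated bound on $n_i$; once that identity is secured the variational machinery proceeds as before, and strict convexity of $J$ additionally gives uniqueness of the minimizer.
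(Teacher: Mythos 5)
Your proposal is correct and follows essentially the same route as the paper's proof: the same functional $J$, the same mean-value/Jensen decomposition, the change of variables $\bar{\mathbf{w}}=T^{T}\bar{\mathbf{v}}$ combined with $T\mathbf{q}=\mathbf{p}$ to get the coercive lower bound $\sum_i(\sigma_i e^{\bar w_i}-q_i\bar w_i)$ with $q_i>0$, and then the direct minimization using Lemmas \ref{2.2} and \ref{21}. The only difference is cosmetic: you spell out the linear-algebra identity $L^{T}\mathbf{p}=\mathbf{q}$ and the compactness argument explicitly, where the paper writes the substitution componentwise and refers back to the argument of its earlier existence lemma.
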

\begin{proof}
For any $\mathbf{v}=(v_1,\dots,v_N)^{T}\in \underbrace{H^{1}(V)\times \dots \times H^{1}(V)}_{N}$. By the notation \eqref{no} and Jensen's inequality, we have
\begin{equation}\label{4.19}
	\begin{aligned}
		\int_{V} \mathrm{e}^{u_{i}^{0}+t_{i i}\left(\bar{v}_{i}+\hat{v}_{i}\right)+\sum_{k=1}^{i-1} \alpha_{k}\left(\bar{v}_{k}+\hat{v}_{k}\right)} \mathrm{d} \mu & \geq|V| \exp \left(\frac{1}{|V|} \int_{V} u_{i}^{0} \mathrm{~d} \mu \right) \exp \left(t_{i i} \bar{{v}}_{i}+\sum_{k=1}^{i-1} \alpha_{k} \bar{{v}}_{k}\right) \\
		& =: C_{i} \mathrm{e}^{t_{i i} \bar{v}_{i}+\sum_{k=1}^{i-1} \alpha_{k} \bar{v}_{k}}, \quad i=1, \ldots, N.
	\end{aligned}
\end{equation}
By \eqref{413}, we rewrite \eqref{410} as 
\begin{equation}\label{420}
    J(\mathbf{v})=\int_{V}\left(\frac{1}{2} \sum_{i=1}^N\Gamma (\hat{v}_{i},\hat{v}_{i}) +\sum_{i=1}^N \mathrm{e}^{u_i^0+t_{i i} v_i+\sum_{k=1}^{i-1} \alpha_k v_k}\right) \mathrm{d}\mu-\sum_{i=1}^N p_i \bar{v}_i.
\end{equation}
Combining \eqref{420} with \eqref{4.19}, we know that 
\begin{equation}
    J(\mathbf{v})-\frac{1}{2} \int_{V} \sum_{i=1}^N\Gamma (\hat{v}_{i},\hat{v}_{i})  \mathrm{~d}\mu \geq \sum_{i=1}^N C_i \mathrm{e}^{t_{ii} \bar{v}_i+\sum_{k=1}^{i-1} \alpha_k \bar{v}_k}-\sum_{i=1}^N p_i \bar{v}_i .
\end{equation}
From \eqref{14}, one may obtain 
\begin{equation}
    p_i=t_{i i} q_i+\alpha_i \sum_{j=i+1}^N q_j, \quad i=1, \ldots, N.
\end{equation}
It follows that 
\begin{equation}
    \sum_{i=1}^N p_i \bar{v}_i=\sum_{i=1}^N q_i\left(t_{i i} \bar{v}_i+\sum_{k=1}^{i-1} \alpha_k \bar{v}_k\right) .
\end{equation}
Define \begin{equation}\label{s2}
	\bar{w}_{i}:=t_{i i} \bar{v}_{i}+\sum_{k=1}^{i-1} \alpha_{k} \bar{v}_{k}, \quad i=1, \ldots, N.
\end{equation} 
Hence by inequality \eqref{320}, we conclude that 
\begin{equation}\label{4.25}
\begin{aligned}
J(\mathbf{v})-\frac{1}{2} \int\limits_{V} \sum_{i=1}^{N}\left|\nabla \hat{v}_{i}\right|^{2} \mathrm{~d} \mu
 &\geq \sum_{i=1}^{N}\left(C_{i} \mathrm{e}^{\bar{w}_{i}}-q_{i} \bar{w}_{i}\right) \\
 & \ge \sum_{i=1}^{N} q_{i}\left(1+\ln \left(\frac{C_{i}}{q_{i}}\right)\right).		
\end{aligned}
\end{equation}
Considering the following minimization problem 
\begin{equation}\label{m}
	\eta \equiv \inf \left\{J(\mathbf{v}) \mid \mathbf{v} \in \underbrace{H^{1}(V)\times \dots \times H^{1}(V)}_{N}\right\}.
\end{equation}
By a similar argument as in the proof of Lemma \ref{32}, we see that $J$ is weakly lower semi-continuous in $H^{1}(V)$.
Let $\{ (v_{1,k},\dots,v_{N,k})\}_{k=1}^{\infty}$ be a minimizing sequence of \eqref{m}, 
In view of \begin{equation}
 \lim_{t\to\infty}C_{i}e^{t}-q_{i}t=\infty~\text{ for }~i=1,\cdots,N.
\end{equation}
Using \eqref{4.25}, one may deduce that
\begin{equation}
\{\bar{w}_{i,k}\}_{k=1}^{\infty} \text{ is bounded for }i=1,\cdots,N,
\end{equation} where \begin{equation}\label{s2}
\bar{w}_{i,k}:=t_{i i} \bar{v}_{i,k}+\sum_{j=1}^{i-1} \alpha_{j} \bar{v}_{j,k}, \quad i=1, \ldots, N.
\end{equation}
Combining this with \eqref{s2}, 
\begin{equation}\label{429}
    \{\bar{v}_{i,k}\}_{k=1}^{\infty} \text{ is bounded for }i=1,\cdots,N.
\end{equation}
From \eqref{4.25}, we see that $\{\|\nabla\hat{v}_{i,k}\|_{2}\}_{k=1}^{\infty}$ is bounded for all $i=1,2,\cdots,N$. Then, by Lemma \ref{2.2}, one may obtain 
 $\{\|\hat{v}_{i,k}\|_{2}\}_{k=1}^{\infty}$ is bounded for all $i=1,2,\cdots,N$. From this and \eqref{429}, $\{{v}_{i,k}\}_{k=1}^{\infty}=\{\bar{v}_{i,k}+\hat{v}_{i,k}\}_{i=1}^{\infty}$ is bounded in $H^{1}(V)$ for all $i=1,2,\cdots,N$. Therefore,
we can deduce that there exists $\mathbf{v_{\infty}}:=(v_{1,\infty},\dots,v_{N,\infty})^{T}$ such that, by passing to a subsequence, 
\begin{equation}
	v_{i,k}\to v_{i,\infty}
\end{equation}
uniformly for $x\in V$ as $k\to+\infty$ for $i=1,\dots,N.$ Thus, $\mathbf{v}_{\infty}$ is a critical point of $J$. It's easy to check that $J$ is strictly convex in $H^{1}(V)$. Thus, we know that the solution of equations \eqref{4.9} is unique.

The proof is finished.
\end{proof}

\begin{proof}[Proof of Theorem \ref{t2}]
	The desired conclusion follows directly from Lemmas \ref{311} and \ref{41}.
\end{proof}

Next, we give a constrained minimization approach to the problem.

Denote \begin{equation}\label{s3}
	I_{i}(\mathbf{v})=\int\limits_{V} \mathrm{e}^{u_{i}^{0}+t_{i i} v_{i}+\sum_{k=1}^{i-1} \alpha_{k} v_{k}} \mathrm{~d} \mu=q_{i}, \quad i=1, \ldots, N,
\end{equation}
We consider the following constrained minimization problem
\begin{equation}\label{so}
	\gamma= \inf \left\{J(\mathbf{v}) \mid \mathbf{v} \in \underbrace{H^{1}(V)\times \dots \times H^{1}(V)}_{N}, I_{1}(\mathbf{v})=q_{1}, \ldots, I_{N}(\mathbf{v})=q_{N}\right\}.
\end{equation}
We now investigate whether the constraints in \eqref{so} give rise to the so-called "constraints" problem due to the issue of the Lagrange multipliers. For this purpose, let $\mathbf{v}=(v_1,\dots,v_N)^{T}$ be a critical point of $J$ subject to the constraints
\begin{equation}\label{4.33}
	I_{i}(\mathbf{v})=q_{i},~i=1,\dots,N.
\end{equation}
Then we can find real numbers $\sigma_{1},\dots,\sigma_{\textsc{N}}$ such that
\begin{equation}\label{L}
	d_{i} J=\sum\limits_{j=1}^{N} \sigma_j d_{i} I_{j},~i=1,\dots,N,
\end{equation}
where $d_{i}(i=1,\dots,N)$ denote the Fr{$\acute{e}$}chet differention with respect to the i-th arguments, respectively. Let $F=(t_{ij})$, $l_{ij}$ be the entries of the matrix $L=(F^{T})^{-1}$.
Then, for any $z_{1},\dots,z_{N} \in H^{1}(V),$ 
\begin{equation}\label{s1}
	\begin{aligned}
		&\int\limits_{V} \left\{\Gamma( v_{i},z_{i})  +\left(t_{i i} \mathrm{e}^{u_{i}^{0}+t_{i i} v_{i}+\sum\limits_{k=1}^{i-1} \alpha_{k} v_{k}}+\alpha_{i} \sum_{j=i+1}^{N} \mathrm{e}^{u_{j}^{0}+t_{j j} v_{j}+\sum\limits_{k=1}^{j-1} \alpha_{k} v_{k}}+\sum_{j=1}^{i} l_{i j} f_{j}\right) z_{i}\right\} \mathrm{d} \mu \\
		&=\sigma_{i} t_{i i} \int\limits_{V} \mathrm{e}^{u_{i}^{0}+t_{i i} v_{i}+\sum\limits_{k=1}^{i-1} \alpha_{k} v_{k}} z_{i} \mathrm{~d} \mu+\alpha_{i} \sum\limits_{j=i+1}^{N} \sigma_{j} \int\limits_{V} \mathrm{e}^{u_{j}^{0}+t_{j j} v_{j}+\sum\limits_{k=1}^{j-1} \alpha_{k} v_{k}} z_{i} \mathrm{~d} \mu.
	\end{aligned}
\end{equation}
Taking $z_{1},\dots,z_{N}=1$ in \eqref{s1}, we deduce that 
\begin{equation}
	\sigma_{i} t_{i i} q_{i}+\alpha_{i} \sum_{j=i+1}^{N} \sigma_{j} q_{j}=0, \quad i=1, \ldots, N,
\end{equation}
and hence that
\begin{equation}
\sigma_{N}=\sigma_{N-1}=\dots=\sigma_{1}=0,
\end{equation}
which reveals that all terms in \eqref{L} arising from the Lagrange multipliers are automatically absent. Thus, a solution of \eqref{so} satisfies \eqref{4.9}. Applying the notation \eqref{no}, we rewrite 
\eqref{s3} as 
\begin{equation}
	\mathrm{e}^{t_{i i} \bar{v}_{i}+\sum\limits_{k=1}^{i-1} \alpha_{k} \bar{v}_{k}} \int\limits_{V} \mathrm{e}^{u_{i}^{0}+t_{i i} \hat{v}_{i}+\sum\limits_{k=1}^{i-1} \alpha_{k} \hat{v}_{k}} \mathrm{~d} \mu=q_{i}, \quad i=1, \ldots, N,
\end{equation}
from which we deduce that 
\begin{equation}\label{s35}
	\bar{v}_{i}=\sum_{j=1}^{i} l_{i j}\left(\ln q_{j}-\ln I_{j}(\hat{\mathbf{v}})\right), \quad i=1, \ldots, N,
\end{equation}
where 
$$
\hat{\mathbf{v}}=\left(\hat{v}_{1}, \ldots, \hat{v}_{N}\right)^{T}.
$$
Thus, using \eqref{s35}, we can rewrite \eqref{410} as 
\begin{equation}
	\begin{aligned}
		J(\mathbf{v})- \sum_{i=1}^{N} \int\limits_{V} \frac{1}{2}\Gamma (\hat{v}_{i},\hat{v}_{i}) \mathrm{~d} \mu &=\sum_{i=1}^{N} q_{i}-\sum_{i=1}^{N} p_{i} \bar{v}_{i} \\
		&=\sum_{i=1}^{N} \sum_{j=1}^{i} p_{i} l_{i j} \ln I_{j}(\hat{\mathbf{v}})+\sum_{i=1}^{N}\left(q_{i}-p_{i} \sum_{j=1}^{i} l_{i j} \ln q_{j}\right).
	\end{aligned}
\end{equation}
We rewrite \eqref{14} as
\begin{equation}
	J(\mathbf{v})=\frac{1}{2} \sum_{i=1}^{N} \int\limits_{V} \Gamma (\hat{v}_{i},\hat{v}_{i}) \mathrm{~d} \mu+\sum_{i=1}^{N} q_{i} \ln I_{i}(\hat{\mathrm{v}})-C,
\end{equation}
where $C=C(L,\mathbf{p},\mathbf{q})$ ia a constant.
By the Jensen inequality, we deduce that
\begin{equation}
	I_{i}(\hat{\mathbf{v}}) \geq|V| \exp \left(\int_{V} u_{i}^{0} \mathrm{~d} \mu\right) =:\mu_{i}, \quad i=1, \ldots, N.
\end{equation}
Since $q_{i}>0$ for $i=1,2,\cdots,N$, we obtain 
\begin{equation}\label{439}
	J(\mathbf{v})-\frac{1}{2} \sum_{i=1}^{N} \int\limits_{V}\left|\nabla \hat{v}_{i}\right|^{2} \mathrm{~d} \mu \geq \sum_{i=1}^{N} q_{i} \ln \mu_{i}-C.
\end{equation}
Set $\{ (v_{1,k},\dots,v_{N,k}) \}_{k=1}^{\infty}$ be a minimizing sequence of \eqref{so}. By \eqref{439},  $$\{( |\nabla\hat{v}_{1,k}|,\dots,|\nabla\hat{v}_{N,k}|) \}_{k=1}^{\infty}\text{ is bounded in }~L^{2}(V).$$ By Lemma \ref{2.2}, $\{( \hat{v}_{1,k},\dots,\hat{v}_{N,k})\}_{k=1}^{\infty}$ is bounded in $L^{2}(V)$.  By Lemma \ref{21}, we deduce that, by passing to a subsequence,
\begin{equation}
\hat{v}_{i,k}\to w_{i,\infty} \text{ uniformly for } x\in ~V \text{ as } k\to+\infty \text{ for } i=1,\dots,N.
\end{equation}
 Thus, from \eqref{s35}, by passing to a subsequence, 
 \begin{equation}
\bar{v}_{i,k}\to \bar{w}_{i,\infty} \text{ as }k\to+\infty \text{ for all } i=1,2,\cdots,N. 
 \end{equation}
Hence \begin{equation}
    v_{i,k}=\bar{v}_{i,k}+\hat{v}_{i,k}\to\bar{w}_{i,\infty}+\hat{w}_{i,\infty}:=w_{i,\infty}\text{ as }k\to+\infty,~i=1,2,\cdots,N.
\end{equation}
From \eqref{4.33}, \begin{equation}
  I_{i}({\bf v}^{(k)})=q_{i},~i=1,2,\cdots,N.
\end{equation} where ${\bf v}^{(k)}:=({v}_{1,k},{v}_{2,k},\cdots,{v}_{N,k})^{T}$.
Letting $k\to\infty$ in above equality, we deduce that 
\begin{equation}
    I_{i}({\bf w_{\infty}})={q_i},
\end{equation}
where $\mathbf{w}_{\infty}:=(w_{1,\infty},\dots,w_{N,\infty})^{T}$.
Thus, by \eqref{s3}, we know that 
$$\gamma=J(\mathbf{w}_{\infty}),$$
 where $\mathbf{w}_{\infty}:=(w_{1,\infty},\dots,w_{N,\infty})$. 
 Therefore, we know that $\mathbf{w}_{\infty}$ is a solution to the problem \eqref{so}. It follows that $\mathbf{w}_{\infty}$ is a solution to the problem \eqref{4.9}. 

\section* {Acknowledgements}
The author thanks the unknown referee very much for helpful suggestions. This work is financially supported by the China Postdoctoral Science Foundation (Grant No. 2022M711045), and the National
Natural Science Foundation of China (Grant No. 12201184).

\end{document}